\newtheorem{thm}{Theorem}[section]
\newtheorem{lem}[thm]{Lemma}
\newtheorem{prop}[thm]{Proposition}
\theoremstyle{definition}
\newtheorem{defn}[thm]{Definition}
\theoremstyle{remark}
\title[A Novel Supergeometric Generalization of Grassmannians]{A Novel Supergeometric Generalization of Grassmannians}
\author[Bahadorykahlily]{F. Bahadorykhalily}
\address{Department of Mathematics, Institute for Advanced Studies in Basic Sciences (IASBS), No. 444, Prof. Yousef Sobouti Blvd.
P. O. Box 45195-1159 Zanjan Iran, Postal Code 45137-66731}
\curraddr{}
\email{f.bahadory@iasbs.ac.ir}
\thanks{}
\author[Mohammadi]{M. Mohammadi}
\address{Department of Mathematics, Institute for Advanced Studies in Basic Sciences (IASBS), No. 444, Prof. Yousef Sobouti Blvd.
P. O. Box 45195-1159 Zanjan Iran, Postal Code 45137-66731}
\curraddr{}
\email{moh.mohamady@iasbs.ac.ir}
\thanks{}
\author[Varsaie]{S. Varsaie}
\address{Department of Mathematics, Institute for Advanced Studies in Basic Sciences (IASBS), No. 444, Prof. Yousef Sobouti Blvd.
P. O. Box 45195-1159 Zanjan Iran, Postal Code 45137-66731}
\curraddr{}
\email{varsaie@iasbs.ac.ir}
\thanks{}
\date{}
\begin{document}

\begin{abstract}
A new generalization of Grassmannians to supergeometry, different from the well known supergrassmannian, is introduced. These are constructed by gluing a finite number of copies of a $ \nu $- domain, i.e. a superdomain with an odd involution, say $ \nu $, on their structure sheaf considered as a sheaf of $ C^\infty_{\mathbb{R}^m} $-modules.\\
\smallskip
\noindent \textbf{Keywords.} supermanifolds,  supergrassmannians, super vector bundles.
\end{abstract}
\subjclass[2010]{58A50}

\maketitle

\section*{Introduction}\label{int}
Many different constructions in common geometry have found proper analogues
in supergeometry such as tangent vectors, tangent bundles, vector
bundles, differential forms, etc. But Chern classes has not found a satisfactory
generalization in supergeometry. The Chern classes are cohomology elements
which may be associated to isomorphism classes of vector bundles. In physics,
these classes are related to special sort of quantum numbers called topological
charges \cite{last}. These classes may be defined by homotopy classification of vector bundles approach.

From this approach, one may see that for generalizing the Chern
classes, it is necessary to generalize Grassmannians as well as homotopy classification of vector bundles theorem. There exists a well known generalization of Grassmanian to supergeometry,
called supergrassmanian. Supergrassmannian, introduced in \cite{f41} and Grassmannian, in some sense, are homotopy equivalent. Therefore, cohomology group associated to
supergrassmannian is equal to that of Grassmannian. In other words, the former group contains no
information about superstructure. For more information see \cite{Afshari}. But there is a different generalization  called $ \nu $-Grassmanian, c.f. section ~\eqref{sec2},  which plays a main role for homotopy classification of super vector bundles. In addition, through $ \nu $-Grassmanians, one may associate an element in $ \mathbb{Z}_2 $- graded cohomology group to each isomorphism class of super vector bundles. These elements may be considered as analogues of Chern classes in super geometry. To see these results in special cases for $ \nu $-projective spaces refer to \cite{Afshari}, \cite{Roshandel}, \cite{v3} and \cite{v5}.

In this paper, we deal with introduction and construction of $ \nu $-Grassmanians. Other relevant results such as homotopy classification of super vector bundles or $ \mathbb{Z}_2 $- graded cohomology group associated to isomorphism class of super vector bundles will be discussed in forthcoming papers.

In this paper, the first  section contains a brief summary of supermanifolds and also $\nu$-domains. Proposition \eqref{prop1.1} shows that an odd involution always exists.

In section \ref{sec2}, we introduce $\nu$-Grassmannians as a generalization of the Grassmannians. For this, we construct a $\nu $-Grassmannian by gluing $ \nu $-domains. The existence of a canonical super vector bundle over a $ \nu $-Grassmannian is showed in Theorem \eqref{thm}.

\section{Preliminaries}\label{pre}
A supermanifold of dimension  $ m|n $ is a superspace, namely $ (X,\mathcal{O}) $, which is locally isomorphic to a superdomain $ (\mathbb{R}^m, C^\infty_{\mathbb{R}^m} \otimes \wedge \mathbb{R}^n) $, where by $ C^\infty_{\mathbb{R}^m} $ we mean the sheaf of smooth functions on $ \mathbb{R}^m $.
Let $ \mathcal{O}= \mathcal{O}_0 \oplus \mathcal{O}_1 $ and let $ a $ be an element of $ \mathcal{O}_{\delta} $; then $a$ is called a homogeneous element of degree $ p(a)=\delta $.
A morphism between two supermanifolds, $ (X, \mathcal{O}_X) $ and $ (Y, \mathcal{O}_Y) $, is a pair $ \psi =(\tilde{\psi}, \psi^*) $ such that $ \tilde{\psi}: X \rightarrow Y $ is a continuous map and $ \psi^* :\mathcal{O}_Y \rightarrow \psi_*(\mathcal{O}_X) $ is a morphism between sheaves of super commutative local rings.

Let $ \mathcal{J}_X $ be the sheaf of nilpotents in $ \mathcal{O}_X $. Obviously, $ \mathcal{J}_X $ is a sheaf of ideals in $ \mathcal{O}_X $. Thus $ \tilde{\mathcal{O}}_X:=\dfrac{\mathcal{O}_X}{\mathcal{J}_X} $ is a sheaf of rings and is isomorphic to $ C^\infty_X $. Thus $ (X,\tilde{\mathcal{O}}_X) $ is a common smooth manifold and is called \textbf{reduced manifold}  associated to $ (X,\mathcal{O}_X) $.

By a $ \nu $- domain, we mean a superdomain $ (\mathbb{R}^m, C^\infty_{\mathbb{R}^m} \otimes \wedge \mathbb{R}^n) $ with an odd involution, say $ \nu $, on the structure sheaf considered as a sheaf of $ C^\infty_{\mathbb{R}^m} $-modules, i.e.
\begin{equation*}
\nu^2=1.
\end{equation*}
A superdomain $ \mathbb{R}^{m|n}=(\mathbb{R}^m, C^\infty_{\mathbb{R}^m}\otimes \wedge \mathbb{R}^n) $ is a $ \nu $-domain if and only if its structure sheaf carries a  $C^{\infty}_{\nu_0}$-module structure where $ C^{\infty}_{\nu_0}=C^{\infty}_{\mathbb R}[\nu_0]$ is a ring generated by $\nu_0$ such that
%, considering $ \nu $ as a morphism of sheaf of  
%abelian groups $ C^{\infty}_{\mathbb{R}^m}\otimes \wedge \mathbb{R}^n $, for each $ p\in \mathbb{R}^m $, it satisfies the following conditions:
\begin{equation*}
\nu_0^2=1, \; \nu_0(\wedge^o \mathbb{R}^n)\subseteq C^{\infty}(\mathbb{R}^m)|_0\otimes\wedge^e \mathbb{R} , \; \nu_0(\wedge^e \mathbb{R}^n)\subseteq C^{\infty}(\mathbb{R}^m)|_0\otimes\wedge^o \mathbb{R}
\end{equation*} 
where $ \wedge ^{\circ}\mathbb{R}^n= \sum \wedge^{2t+1}\mathbb{R}^n $ and $ \wedge^e \mathbb{R}^n=\sum \wedge^{2t} \mathbb{R}^n $ are odd and even parts of the $\mathbb{Z}_2$-graded ring  $ \wedge \mathbb{R}^n $ respectively.
Indeed, if $\mathbb{R}^{m|n}$ is a $\nu$-domain then for each element $p$ in $\wedge{\mathbb R}^n$, set $\nu_0 p:= \nu(p)$. 
%and $ \nu_p $ is restriction of the $ \nu $ on the stalk over $ p\in \mathbb{R}^m $. In addition $ \nu(a)= a\nu(1)\ $ for each $ a\in C^{\infty}_{\mathbb{R}^m} $.

Let $\mathbb{R}_{\nu}^{m|n}$ denote a $\nu$- domain. In the next proposition, the existence of such an involution is shown. For more details see \cite{v4}.
\begin{prop}\label{prop1.1}
	for $ n\geq 1 $, every superdomain $ \mathbb{R}^{m|n} $ is a $ \nu $- domain.
\end{prop}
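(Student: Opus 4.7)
The plan is to exhibit $\nu$ explicitly. Because $\nu$ must be $C^{\infty}_{\mathbb{R}^m}$-linear, it is determined by an odd involution $\nu_0$ on the exterior factor $\wedge\mathbb{R}^n$; one then extends by $\nu(f\otimes \omega):=f\otimes \nu_0(\omega)$. The task therefore reduces to producing a purely algebraic odd involution on $\wedge\mathbb{R}^n$, which is the part of the structure sheaf that carries the $\mathbb{Z}_2$-grading.

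Using the hypothesis $n\geq 1$, I would fix a basis $\{e_1,\dots,e_n\}$ of $\mathbb{R}^n$ and set
\begin{equation*}
\nu_0(\omega):= e_1\wedge\omega\;+\;\iota_{e_1}(\omega),
\end{equation*}
where $\iota_{e_1}$ denotes interior multiplication by $e_1$, i.e.\ the unique odd derivation of $\wedge\mathbb{R}^n$ satisfying $\iota_{e_1}(e_1)=1$ and $\iota_{e_1}(e_j)=0$ for $j\neq 1$. Both summands shift the $\mathbb{Z}_2$-degree by one, so $\nu_0$ is automatically odd; this immediately delivers the two inclusions $\nu_0(\wedge^{e}\mathbb{R}^n)\subseteq \wedge^{\circ}\mathbb{R}^n$ and $\nu_0(\wedge^{\circ}\mathbb{R}^n)\subseteq \wedge^{e}\mathbb{R}^n$ required by the definition of a $\nu$-domain.

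The only substantive point is the involution identity $\nu_0^2=\mathrm{id}$. I would verify it by writing an arbitrary $\omega$ uniquely as $\omega=\alpha+e_1\wedge\beta$ with $\alpha,\beta$ lying in the subalgebra generated by $e_2,\dots,e_n$, so that $\iota_{e_1}$ annihilates both $\alpha$ and $\beta$. Then $\nu_0(\omega)=e_1\wedge\alpha+\beta$, and applying $\nu_0$ once more, using $e_1\wedge e_1=0$, gives $\nu_0^2(\omega)=\alpha+e_1\wedge\beta=\omega$. Equivalently, the whole computation is the classical fermionic anticommutation relation $(e_1\wedge-)\circ\iota_{e_1}+\iota_{e_1}\circ(e_1\wedge-)=\mathrm{id}$.

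There is no genuine obstacle: the hypothesis $n\geq 1$ is used only to supply the basis vector $e_1$, and the rest is a one-line algebraic check. The sole conceptual observation is to recognise that the standard raising/lowering pair $\bigl(e_1\wedge(-),\,\iota_{e_1}\bigr)$ on an exterior algebra furnishes, in a dimension-independent fashion, a canonical odd involution on the structure sheaf of $\mathbb{R}^{m|n}$.
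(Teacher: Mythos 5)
Your proof is correct, and it follows the same overall skeleton as the paper's: reduce to constructing an odd involution $T$ on the exterior factor $\wedge\mathbb{R}^n$ and extend it $C^{\infty}_{\mathbb{R}^m}$-linearly by $a\otimes b\mapsto a\otimes T(b)$. Where you genuinely differ is in how $T$ is produced. The paper fixes an ordered homogeneous basis of $\wedge\mathbb{R}^n$ (the first $2^{n-1}$ vectors even, the rest odd) and takes $T$ to be the map whose matrix is $\bigl(\begin{smallmatrix}0 & A\\ A^{-1} & 0\end{smallmatrix}\bigr)$ for an arbitrary invertible $2^{n-1}\times 2^{n-1}$ matrix $A$; oddness and $T^2=\mathrm{id}$ are read off from the block shape. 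You instead take the canonical Clifford-type operator $\nu_0=e_1\wedge(-)+\iota_{e_1}$, with $\nu_0^2=\mathrm{id}$ following from $(e_1\wedge)^2=\iota_{e_1}^2=0$ together with the anticommutation relation $(e_1\wedge-)\circ\iota_{e_1}+\iota_{e_1}\circ(e_1\wedge-)=\mathrm{id}$; your verification via the decomposition $\omega=\alpha+e_1\wedge\beta$ is complete and correct. Your construction is in fact a particular instance of the paper's family (its matrix in a suitable parity-ordered basis has the required block-antidiagonal form), but it buys explicitness: it is canonical given $e_1$, needs only the single vector supplied by the hypothesis $n\geq 1$ rather than a full ordered basis, and the involution identity becomes a standard one-line algebraic fact. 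What the paper's version buys in exchange is the observation that such involutions exist in abundance, one for each invertible $A$, which is not visible from your single example.
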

\begin{proof}
	Obviously, $ \wedge \mathbb{R}^n $, as a real super vector space, has an ordered basis, say $ \mathcal{B} $, which its  first $ 2^{n-1} $ elements are even and the others are odd. One may easily show the existence of an odd linear transformation $ T:\wedge \mathbb{R}^n \rightarrow \wedge \mathbb{R}^n $ with the following properties:
	\begin{equation*}
	T(\wedge^o \mathbb{R}^n)\subset \wedge^e \mathbb{R}^n, \ T(\wedge^e \mathbb{R}^n) \subset \wedge^o \mathbb{R}^n, \ T^2=id.
	\end{equation*}
	Indeed, let $ A $ be an invertible matrix of rank $ 2^{n-1} $ and let $ T $ be the unique linear transformation with matrix representation in the ordered basis $ \mathcal{B} $, as follows:
	\begin{equation*}
	\left[
	\begin{array}{c|c}
	0 & A \\
	\hline
	A^{-1} & 0
	\end{array}
	\right].
	\end{equation*}
	Now, consider the odd linear transformation $ \nu:C^\infty_{\mathbb{R}^m}\otimes \wedge \mathbb{R}^n \rightarrow C^\infty_{\mathbb{R}^m}\otimes \wedge \mathbb{R}^n $ defined by
	\begin{equation*}
	a \otimes b \longmapsto a \otimes T(b); \ a\in C^\infty_{\mathbb{R}^m}, b \in \wedge \mathbb{R}^n .
	\end{equation*}
	Obviously, $ \nu $ is an odd involution on the sheaf $ C^\infty_{\mathbb{R}^m}\otimes \wedge \mathbb{R}^n $.
\end{proof}

\section{Construction of $\nu$-Grassmannian}\label{sec2}

By a $\nu$-Grassmannian, denoted by $_\nu G_{k|l}(m|n) $,  we mean a supermanifold which is constructed by gluing $\nu$-domains
$ \big(\mathbb{R}^{\alpha}, C^{\infty}_{\mathbb{R}^{\alpha}} \otimes \wedge \mathbb{R}^{\beta} \big) $ 
where $ \alpha=k(m- k)+ l(n-l) $ and $ \beta=l(m- k)+k(n- l) $.\\
Let $I\subset \{1, \cdots, m \}$ and $ R \subset \{1, \cdots, n\}$ be sorted subsets in ascending order, with $ p $ and $ q $ elements respectively such that $p+q=k+l$. The elements of $ I $ are called even indices and the elements of $ R $ are called odd indices. In this case, $I|R$ is called a $p|q$-index. Set
$U_{I|R}=\mathbb{R}^{\alpha}, \mathcal{O}_{I|R}=\,  C^{\infty}_{\mathbb{R}^{\alpha}}\otimes \wedge \mathbb{R}^{\beta}.$
If $ p=k $ then $ I|R $ is called a \textbf{standard index} and $ (U_{I|R},O_{I|R}) $, or $ U_{I|R} $ for brevity, is called a \textbf{Standard domain} and otherwise they are called \textbf{non standard index} and \textbf{non standard domain} respectively. Decompose any even super matrix into four blocks, say $ B_1, B_{2}, B_3, B_4 $. Upper left and lower right blocks, $ B_1, B_4 $ are $ k\times m $ and $ l\times n $ matrices respectively. They are called even blocks. Upper right and lower left blocks, $ B_2, B_3 $ are $ k\times n $ and $ l\times m $ matrices. They are called odd blocks. In addition, by even part we mean the blocks $ B_1, B_3 $ and by odd parts we mean the blocks $ B_2, B_4 $. Blocks, $ B_1, B_4 $ are filled with even elements and blocks, $ B_2, B_3 $ are filled with odd elements. By \textbf{divider line}, we mean the line which separates odd and even parts.

Let each domain $ U_{I|R} $ be labeled by an even $ k|l\times m|n $ supermatrix, say $ A_{I|R} $ with four blocks, $ B_1, B_{2}, B_3, B_4 $ as above.  Except for columns with indices in $ I \cup R $, which together form a minor denoted by $ M_{I|R}(A_{I|R}) $, the even and odd blocks are filled from up to down and left to right by $ x_a^I, e_b^I $, the even and odd coordinates of $ U_{I|R} $ respectively, i.e. $ (x^I_a)_{1\leq a\leq \alpha} $ is a coordinate system on $ \mathbb{R}^\alpha $ and $ \{e^I_b\}_{1\leq b\leq \beta} $ is a basis for $\mathbb{R}^{\beta} $.
%free generators of $ \mathcal{O}_{I|R} $, respectively.
 This process imposes an ordering on the set of coordinates. If $ p=k $ then $ M_{I|R}A_{I|R} $ is supposed to be an identity matrix.

Let $ I|R $ and $ J|S $ be two standard indices and let $  M_{J|S}(A_{I|R}) $ be the minor consisting of columns of $ A_{I|R} $ with indices in $ J\cup S $. By $ U_{{I|R},{J|S}} $ we mean the set of all points of $ U_{I|R} $, on which $ M_{J|S}(A_{I|R}) $ is invertible. Obviously $ U_{{I|R},{J|S}} $ is an open set.  

For example, let $ I=\{2\}, R=\{1,3\} $ and let $ I|R $ be a $ 1|2 $-index in $ _\nu G_{1|2}(3|3) $. In this case, the set of coordinates of $\mathcal{O}_{I|R} $is 
\[
\lbrace x_1, x_2, x_3, x_4; e_{1}, e_{2}, e_{3}, e_{4}, e_{5}\rbrace
\],

and $ A_{I|R} $ is:
\begin{equation*}
\left[
\begin{array}{ccc|ccc}
x_1 & 1 & x_2 & 0 & e_5 & 0 \\
\hline
e_1 & 0 & e_3 & 1 & x_3 & 0 \\
e_2 & 0 & e_4 & 0 & x_4 & 1 \\
\end{array}
\right].
\end{equation*}
Thus $ \{x_1, e_1, e_2, x_2, e_3, e_4, e_5, x_3, x_4\} $ is the corresponding total ordered set of generators.

If $ p\neq k $, then $ M_{I|R}(A_{I|R}) $ is a $ k|l\times p|q $  supermatrix as follows:\\
Let $ M_{I|R}(A_{I|R}) $ be partitioned into four blocks $ B_i, i=1,2, 3, 4 $, as above. All entries of this supermatrix except diagonal entries are zero. In addition, diagonal entries are equal to 1 if they place in $ B_1 $ and $ B_4 $ and are equal to $ 1\nu $ if they place in $ B_2 $ and $ B_3 $, where $ 1\nu $ is a formal  symbol. One may consider it as 1 among odd elements. Nevertheless we learn how to deal with it as we go further. Such supermatrix is called a \textbf{non-standard identity}. All places in $ A_{I|R} $ except for $ M_{I|R}(A_{I|R}) $ are filled by coordinates $ x^I $ and $ e^I $ according to the ordering, as stated above, from up to down and left to right.  In this process, if an even element, say $ x $, places in odd part then it is replaced by  $ \nu(x) $ and if an odd element, say $ e $, places in even part then it is replaced by  $ \nu(e) $.

As an example, consider $ _\nu G_{1|2}(3|3) $ and let $ I=\emptyset, R=\{1,2,3\} $, so $ I|R $ is an $ 0|3 $-index. In this case, $ A_{I|R} $ is as follows:
\begin{equation*}
\left[
\begin{array}{ccc|ccc}
x_1 & x_2 & \nu(e_5) & 1\nu & 0 & 0 \\
\hline
e_1 &  e_3 & \nu(x_3) & 0 & 1 & 0 \\
e_2 &  e_4 & \nu(x_4) & 0 & 0 & 1 \\
\end{array}
\right].
\end{equation*}
Also if $ J=\{2,3\}, S=\{1\} $, then $ J|S $ is a $2|1$-index and $A_{J|S}$ is as follows:
\begin{equation*}
\left[
\begin{array}{ccc|ccc}
x_1 & 1 & 0 & 0 & \nu(x_2) & e_5 \\
\hline
e_1 &  0 & 1\nu & 0 & \nu(e_3) & x_3 \\
e_2 &  0 & 0 & 1 & \nu(e_4) & x_4 \\
\end{array}\right].
\end{equation*}
Now, let us start by constructing morphisms $g_{I|R,J|S}$ through which the $ \nu $- domains $ U_{I|R} $ may be glued together. Set
\begin{equation}\label{equg}
g_{I|R,J|S} =(\tilde{g}_{I|R,J|S}, g^*_{I|R,J|S}): (U_{I|R,J|S}, \mathcal{O}_{I|R}|_{U_{I|R,J|S}}) \rightarrow (U_{J|S,I|R},\mathcal{O}_{J|S}|_{U_{J|S,I|R}}). 
\end{equation}
First of all we introduce $ g^*_{{I|R}.{J|S}} $. For this, we should consider  two cases:
\begin{enumerate}
	\item[Case 1:]
	If both domains are standard domains, then the transition map 
	\begin{equation*}
	g^*_{I|R,J|S}:  \mathcal{O}_{J|S}|_{U_{J|S,I|R}}\rightarrow  \mathcal{O}_{I|R}|_{U_{I|R,J|S}},
	\end{equation*}
	is obtained from the pasting equation:
	\begin{equation*}
	D_{J|S}\bigg(\big(M_{J|S}(A_{I|R})\big)^{-1}A_{I|R}\bigg)=D_{J|S}A_{J|S}.
	\end{equation*}
	Where $ D_{J|S}A_{J|S} $ is a matrix which remains after omitting $ M_{J|S}(A_{J|S}) $. 
	This equation defines $ g^*_{{I|R} ,{J|S}} $, for each entry of $ D_{J|S}(A_{J|S}) $, to be a rational expression in generators of $ \mathcal{O}_{I|R} $. This determines $ g^*_{{I|R},{J|S}} $ as a unique morphism (\cite{f31}, Theo. 4.3.1).
	Clearly, this map is defined whenever $ M_{J|S}(A_{I|R}) $ is invertible.
	\item[Case 2:]
	Let $ U_{I|R} $ be an arbitrary domain, and $ U_{J|S} $ be a non-standard domain, and let $ A_{I|R} $ and $ A_{J|S} $ be their labels respectively. Moreover, let $ J|S $ be a $ p|q $-index such that $ p\neq k $. In this case, $M_{J|S}(A_{I|R})$ is a $ k|l\times p|q $ matrix. Let $ M'_{J|S}(A_{I|R}) $ be a $ k|l\times k|l $ supermatrix associated to $ M_{J|S}(A_{I|R}) $ as follows:\\
	Consider the columns of non-standard identity $ M_{J|S}A_{J|S} $ which contains $ 1\nu $.  Move the columns in $ M_{J|S}(A_{I|R}) $ with the same indices as the columns in $  M_{J|S}(A_{J|S}) $ which contain $ 1\nu $ to another side of the divider line of $  M_{J|S}(A_{I|R}) $ and replace each entry, say $ a $, in these columns with $ \nu(a) $. The resulting matrix is denoted by $ M'_{J|S}(A_{I|R}) $.
\end{enumerate}
For example in $_\nu G_{1|2}(3|3)$ suppose $I=\{2\}, R=\{1,3\}, J=\{2,3\}, S=\{1\}$ , so $ I|R $ is a $1|2$-index and $J|S$ is a $2|1$-index. We have
\begin{equation*}
A_{I|R}=\left[
\begin{array}{ccc|ccc}
x_1 & 1 & x_2 & 0 & e_5 & 0 \\
\hline
e_1 & 0 & e_3 & 1 & x_3 & 0 \\
e_2 & 0 & e_4 & 0 & x_4 & 1 \\
\end{array}
\right], \quad 
A_{J|S}=\left[
\begin{array}{ccc|ccc}
x_1 & 1 & 0 & 0 & \nu(x_2) & e_5 \\
\hline
e_1 &  0 & 1\nu & 0 & \nu(e_3) & x_3 \\
e_2 &  0 & 0 & 1 & \nu(e_4) & x_4 \\
\end{array}\right],
\end{equation*}
\begin{equation*}
M_{J|S}A_{I|R}=\left[
\begin{array}{cc|c}
1 & x_2 & 0 \\
\hline
0 & e_3 & 1 \\
0 & e_4 & 0 \\
\end{array}
\right], \qquad 
M'_{J|S}A_{I|R}=\left[
\begin{array}{c|cc}
1 & \nu x_2 & 0 \\
\hline
0 & \nu e_3 & 1 \\
0 & \nu e_4 & 0 \\
\end{array}
\right].
\end{equation*}
Let $ I|R $ be a standard index and $ J|S $ be a non standard index and let $M'_{J|S}(A_{I|R})$ be as above. By $ U_{{I|R},{J|S}} $ we mean the set of all points of $ U_{I|R} $, on which
$ M'_{J|S}(A_{I|R}) $ is invertible. Obviously, $ U_{{I|R},{J|S}} $ is an open set.  

Now, we can define a coordinate transformation:
\begin{equation*}
g^*_{I|R,J|S}:  \mathcal{O}_{J|S}|_{U_{J|S,I|R}}\rightarrow  \mathcal{O}_{I|R}|_{U_{I|R,J|S}}.
\end{equation*}
This map is obtained from the following equation:
\begin{equation*}
D_{J|S}\bigg(\big(M'_{J|S}(A_{I|R})\big)^{-1}A_{I|R}\bigg)=D_{J|S}A_{J|S}.
\end{equation*}
It can be shown that the sheaves on $ U_{I|R}$ and $U_{J|S} $ can be glued through these maps. By (\cite{f31}, page 135), we have to show the next proposition.
\begin{prop}\label{prop1}
	Let $ g^*_{I|R, J|S} $ be as above, then
	\begin{align*}
	&1.\, g^*_{I|R,I|R}=id.\\
	&2.\, g^*_{I|R,J|S}\circ g^*_{J|S,I|R}=id.\\
	&3.\, g^*_{I|R,J|S}\circ g^*_{J|S,T|P}\circ g^*_{T|P,I|R}=id.
	\end{align*}
\end{prop}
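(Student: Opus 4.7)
The plan is to reduce each cocycle identity to a matrix identity involving the labels $A_{I|R}$. By Theorem~4.3.1 of \cite{f31}, each morphism $g^*_{I|R,J|S}$ is uniquely determined by its action on the non-minor entries of $A_{J|S}$ (the generators of $\mathcal{O}_{J|S}$), and these images are read off directly from the defining equation
$$D_{J|S}\bigl((M'_{J|S}(A_{I|R}))^{-1} A_{I|R}\bigr) = D_{J|S} A_{J|S}.$$
So it suffices to verify, for each claim, that the prescribed composition sends each non-minor entry of $A_{J|S}$ (respectively of $A_{I|R}$) back to itself.

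Part~1 is immediate: for $J|S = I|R$ the minor $M_{I|R}(A_{I|R})$ is, by construction, the standard or non-standard identity supermatrix, so $(M_{I|R}(A_{I|R}))^{-1} A_{I|R} = A_{I|R}$ and the defining equation forces $g^*_{I|R,I|R}$ to act as the identity on generators.

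For Part~2, I would apply the superring homomorphism $g^*_{I|R,J|S}$ to the equation $A_{I|R} = (M'_{I|R}(A_{J|S}))^{-1} A_{J|S}$ defining $g^*_{J|S,I|R}$. Using $g^*_{I|R,J|S}(A_{J|S}) = (M'_{J|S}(A_{I|R}))^{-1} A_{I|R}$ together with the fact that the $I|R$-columns of $A_{I|R}$ form the identity, extracting the $I|R$-columns from the image of the right-hand side identifies $g^*_{I|R,J|S}(M'_{I|R}(A_{J|S}))$ with $M'_{J|S}(A_{I|R})$ (after accounting for the $\nu$-twists on displaced columns). The two inverse-of-minor factors then cancel, producing $A_{I|R}$, as required. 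Part~3 is the analogous telescoping computation for three charts: the triple composition applied to $A_{I|R}$ yields a product of three minor inverses times $A_{I|R}$, and repeated extraction of the appropriate columns (which are identity in each native chart) replaces each minor image by the inverse of the preceding one, collapsing the product to $A_{I|R}$ itself.

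The main obstacle I anticipate is the careful bookkeeping of the $\nu$-twists in the non-standard cases. Whenever a column crosses the divider line in passing from $M_{J|S}$ to $M'_{J|S}$, each of its entries acquires a factor of $\nu$, and the proofs of parts 2 and 3 require showing that along any composition every such twist is paired with a matching one so that the involution relation $\nu^2 = \mathrm{id}$ restores the untwisted entries. Equivalently, one must check that the pattern of ``displaced'' columns behaves consistently along every cycle of transitions; this parity accounting, rather than the algebraic manipulation of minors, is the only delicate point.
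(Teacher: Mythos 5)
Your proposal follows essentially the same route as the paper: each identity is reduced to a telescoping matrix computation whose key (and only delicate) step is that applying $g^*_{I|R,J|S}$ entrywise to $A_{J|S}$ --- equivalently, reinstating the non-standard identity minor inside $Z^{-1}A_{I|R}$ with $Z=M'_{J|S}A_{I|R}$ --- returns $Z^{-1}A_{I|R}$ itself, which the paper verifies column by column via $Z(Id_{j_r})=(\nu C_{j_r})1\nu=C_{j_r}$ using the rule $z\cdot 1\nu=\nu(z)$. One small slip to fix: in Part~2 the extracted minor $g^*_{I|R,J|S}\bigl(M'_{I|R}(A_{J|S})\bigr)$ equals $\bigl(M'_{J|S}(A_{I|R})\bigr)^{-1}$, not $M'_{J|S}(A_{I|R})$; with that correction the cancellation you describe is exactly the paper's computation.
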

\begin{proof}
	\begin{enumerate}
		\item[1.] For the first equation, note that the map $ g^*_{I|R,I|R} $ is obtained from the equation
		\begin{equation*}
		D_{I|R}\bigg((M'_{I|R}A_{I|R})^{-1}A_{I|R}\bigg)=D_{I|R}A_{I|R}.
		\end{equation*}
		In this equality, for each $ I|R $, the matrix $ M'_{I|R}A_{I|R} $ is identity. So we have the following equality:
		\begin{equation*}
		D_{I|R}A_{I|R}=D_{I|R}A_{I|R}.
		\end{equation*}
	This shows that $ g^*_{{I|R},{I|R}}= id $.
		\item[2.]
		For the second equality, let $ J|S $ be a $ p|q $-index such that $ p\neq k $. Assume  $ p>k $, so $ g^*_{I|R,J|S} $ is defined by the pasting equality:
		\begin{equation*}
		D_{J|S}\bigg((M'_{J|S}A_{I|R})^{-1}A_{I|R}\bigg)=D_{J|S}A_{J|S}.
		\end{equation*}
		For brevity, we use $ Z $ instead of  $ M'_{J|S}A_{I|R} $, we get
		\begin{equation*}
		D_{J|S}(Z^{-1}A_{I|R})=D_{J|S}A_{J|S}.
		\end{equation*}
		By $ [A_{I|R}]_{J|S} $ we mean a matrix which is obtained from $ A_{I|R} $ after replacing $ M_{J|S}A_{I|R} $ with a non- standard identity.\\
		One may see that $  g^*_{I|R,J|S}\circ g^*_{J|S,I|R} $  is obtained by the following equality:
		\begin{equation}\label{eq1}
		\bigg({M^\prime}_{I|R}\bigg[Z^{-1}A_{I|R}\bigg]_{J|S}\bigg)^{-1}D_{I|R}\bigg[Z^{-1}A_{I|R}\bigg]_{J|S}=D_{I|R}A_{I|R}.
		\end{equation}
		If $ C_t, 1\leq t\leq m+n $, denotes the t-th column of $ A_{I|R} $, then
		%& A_{I|R}=	[C_1,\cdots, C_{j_1}, \cdots, C_{j_k},\cdots, C_{j_p}, \cdots, C_m|\bar{C}_1,\cdots, \bar{C}_{s_1},\cdots, \bar{C}_{s_q},\cdots, \bar{C}_n]\\
		%& M_{J|S}A_{I|R}=[C_{j_1},C_{j_2}, \cdots, C_{j_k},C_{j_{k+1}},\cdots, C_{j_p}|\bar{C}_{s_1},\bar{C}_{s_2},\cdots, \bar{C}_{s_q}]\\
		\begin{equation}
		Z:= M^{\prime}_{J|S}A_{I|R}=[C_{j_1},C_{j_2}, \cdots, C_{j_k}|\nu C_{j_{k+1}},\cdots, \nu C_{j_p},C_{s_1},
		C_{s_2},\cdots, {C}_{s_q}],
		\end{equation}
		where $I=\{j_1, ..., j_p\}$ and $R=\{s_1, ..., s_q\}$.
		Therefore the supermatrix $ \big[Z^{-1}A_{I|R}\big]_{J|S} $ is as follows:
		\begin{multline*}
		\big[Z^{-1}C_1,\cdots, Id_{j_1},\cdots, Id_{j_k}, Id_{j_{k+1}},\cdots,Id_{j_p}, \cdots, Z^{-1}C_m \big\vert\\ 
		Z^{-1}C_{m+1},\cdots, Id_{s_1}, \cdots, Id_{s_q},\cdots, Z^{-1}C_{m+n}\big].
		\end{multline*}
		where $ Id_{j_r} $ is a columnar supermatrix with the only one nonzero entry equals $ 1 $ on $ r $-th row if $ 1\leq r\leq k $,  and equals $ 1\nu $ if $ k+1\leq r\leq p $. In addition, $Id_{s_t}$ is a columnar supermatrix with the only one nonzero entry equals to $ 1 $ on the $ (k+t)- th $ row for each $ 1\leq t\leq q $. So one has
		%   the above statement of $ \big[Z^{-1}D_{J|S}A_{I|R}\big]_{J|S} $ can be written as follows:
		\begin{multline*}
		\big[Z^{-1}A_{I|R}\big]_{J|S}= Z^{-1}[C_1, \cdots, Z(Id_{j_1}),  \cdots, Z(Id_{j_k}), \cdots, Z(Id_{j_p}), \cdots, C_m \big\vert\\
		C_{m+1}, \cdots, Z(Id_{s_1}), \cdots, Z(Id_{s_q}),\cdots, C_{m+n}].
		\end{multline*}
		Due to the definition of $Id_{j_r}$ and $Id_{s_t}$ and also the following rule:
		\begin{equation*}
		z.{1\nu} = \nu(z),
		\end{equation*}
		where $z$ is an arbitrary element of $\mathcal O$. The following equalities hold:
		\begin{flalign*}
		&Z(Id_{j_\alpha})=C_{j_\alpha} ,\quad \forall 1\leq \alpha \leq k,\\
		&Z(Id_{j_{r}})=(\nu C_{j_{r}})1\nu=C_{j_{r}},\quad \forall k+1\leq r \leq p,\\
		&Z(Id_{s_t})=C_{s_t},\quad \forall 1\leq t \leq q.
		\end{flalign*}
		Therefore, we have:
		\begin{equation*}
		[Z^{-1}A_{I|R}]_{J|S}=Z^{-1}A_{I|R}.
		\end{equation*}
		So for the left side of equation \eqref{eq1}, one has
		\begin{align*}
		\bigg(M'_{I|R}\big[Z^{-1}A_{I|R}\big]_{J|S}\bigg)^{-1} & D_{I|R}\big[Z^{-1}A_{I|R}\big]_{J|S}\\ & =\bigg(M'_{I|R}(Z^{-1}
		A_{I|R})\bigg)^{-1}D_{I|R}(Z^{-1}A_{I|R})\\
		& = \bigg(Z^{-1}M'_{I|R}A_{I|R}\bigg)^{-1}Z^{-1}D_{I|R}A_{I|R}\\ & =\bigg(M'_{I|R}A_{I|R}\bigg)^{-1}Z
		Z^{-1}D_{I|R}A_{I|R}=D_{I|R}A_{I|R}.
		\end{align*}
	It follows that the map $ g^*_{I|R,J|S}\circ g^*_{J|S,I|R} $ is identity.
		\item[3.]
		To prove the third equality, it is sufficient to show that the map $ g^*_{I|R,J|S}og^*_{J|S,T|P} $ is defined by the following equation:
		\begin{equation*}
		D_{T|P}((M'_{T|P}A_{I|R})^{-1}A_{I|R})=D_{T|P}A_{T|P} .
		\end{equation*}
		Note that the map $ g^*_{I|R,J|S} $ is obtained by
		\begin{equation}\label{eq2}
		(M'_{J|S}A_{I|R})^{-1}D_{J|S}A_{I|R}=D_{J|S}A_{J|S}.
		\end{equation}
%		So according to the case 2 and by putting $ Z=M'_{J|S}A_{I|R} $, \eqref{eq2} becomes
So replacing $ A_{J|S} $ by the left hand side of \eqref{eq2} in pasting equality defining $ g^*_{{J|S} , {T|P}} $ and by setting $ Z:= M'_{J|S}A_{I|R} $, one gets
		\begin{equation*}
		D_{T|P}(M'_{T|P}[Z^{-1}A_{I|R}]_{J|S})^{-1}[Z^{-1}A_{I|R}]_{J|S})=D_{T|P}A_{T|P}.
		\end{equation*}
		An analysis similar to that in case 2 shows that  $ [Z^{-1}A_{I|R}]_{J|S}=Z^{-1}A_{I|R} $. Hence, we have
		\begin{flalign*}
		& (M'_{T|P}(Z^{-1}A_{I|R}))^{-1}D_{T|P}(Z^{-1}A_{I|R})=\\
		& (Z^{-1}M'_{T|P}A_{I|R})^{-1}Z^{-1}D_{T|P}A_{I|R}=\\
		& (M'_{T|P}A_{I|R})^{-1}ZZ^{-1}D_{T|P}A_{I|R}=\\
		& D_{T|P}((M'_{T|P}A_{I|R})^{-1}A_{I|R}).
		\end{flalign*}
		Finally, this calculation shows that the composition $g^*_{I|R,J|S}og^*_{J|S,T|P}$ is obtained by
		\begin{equation*}
		D_{T|P}((M'_{T|P}A_{I|R})^{-1}A_{I|R})=D_{T|P}A_{T|P},
		\end{equation*}
		and it completes the proof. 
	\end{enumerate}
\end{proof}
In the next lemma, we specify $ \tilde{g}_{I|R,J|S} $,  the map which is introduced in \eqref{equg}.
\begin{lem}\label{lem2.2}
	Let $g^*_{I|R,J|S}$ be the gluing morphism between $\mathcal{O}_{J|S}|_{U_{{J|S},{I|R}}}$ and $\mathcal{O}_{I|R}|_{U_{{I|R},{J|S}}}$. Then, there is a morphism $(\tilde{g}_{I|R,J|S},\tilde{g}^*_{I|R,J|S})$ between $(U_{{I|R},{J|S}}, C^{\infty}_{I|R}|_{U_{{I|R},{J|S}}})$ and $(U_{{J|S},{I|R}}, C^{\infty}_{J|S}|_{U_{J|S,I|R}})$ induced by  $g^*_{I|R,J|S}$ such that $\tilde{g}_{I|R,J|S}$ is a map from $U_{{I|R},{J|S}}$ to $U_{{J|S},{I|R}}$.
\end{lem}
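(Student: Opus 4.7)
The plan is to construct $\tilde{g}_{I|R,J|S}$ pointwise by reducing the data of $g^*_{I|R,J|S}$ modulo nilpotents, and then to read off the induced morphism between the reduced smooth manifolds. Because $g^*_{I|R,J|S}$ is a homomorphism of sheaves of $\mathbb{Z}_2$-graded commutative rings, it sends nilpotent sections to nilpotent sections; in particular, for each even coordinate $x^J_a$ of $U_{J|S}$, the section $g^*_{I|R,J|S}(x^J_a) \in \mathcal{O}_{I|R}(U_{I|R,J|S})$ is even and decomposes uniquely as the sum of a smooth function $\varphi_a \in C^\infty_{I|R}(U_{I|R,J|S})$ and a nilpotent. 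I define
\[
\tilde{g}_{I|R,J|S}(p) := \bigl(\varphi_1(p),\dots,\varphi_\alpha(p)\bigr) \in \mathbb{R}^{\alpha}, \qquad p \in U_{I|R,J|S},
\]
and take $\tilde{g}^*_{I|R,J|S}$ to be the morphism of sheaves of smooth functions obtained by reducing $g^*_{I|R,J|S}$ modulo nilpotents. Smoothness of $\tilde{g}_{I|R,J|S}$ is immediate from smoothness of each $\varphi_a$, and by construction $(\tilde{g}_{I|R,J|S},\tilde{g}^*_{I|R,J|S})$ is a morphism of the reduced ringed spaces.

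The substantive point is to verify that $\tilde{g}_{I|R,J|S}(p) \in U_{J|S,I|R}$, i.e., that $M'_{I|R}(A_{J|S})$ is invertible at the image point. For this I invoke part (2) of Proposition \ref{prop1}, which states $g^*_{I|R,J|S}\circ g^*_{J|S,I|R}=\mathrm{id}$. Applying this identity inside the pasting equation that defines $g^*_{J|S,I|R}$ expresses the reduced entries of $M'_{I|R}(A_{J|S})$ at $\tilde{g}_{I|R,J|S}(p)$ as rational combinations of the reduced entries of $A_{I|R}$ at $p$; invertibility of the former then follows from the defining condition on $p$, namely that $M'_{J|S}(A_{I|R})$ is invertible at $p$.

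The main technical obstacle is this last step, because it requires tracking how the formal symbol $1\nu$ and the involution $\nu$ interact with reduction modulo nilpotents. The key simplification is that every $\nu$-decorated entry, being either purely odd or an even coordinate pushed by $\nu$ into an odd block, becomes nilpotent upon reduction; consequently the invertibility condition on the full supermatrix collapses to the ordinary invertibility of its two even diagonal blocks, which the pasting equation transports between $p$ and $\tilde{g}_{I|R,J|S}(p)$ thanks to the involutivity in Proposition \ref{prop1}.
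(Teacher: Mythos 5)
Your proposal is correct and rests on the same underlying idea as the paper's proof, namely that $g^*_{I|R,J|S}$, being a homomorphism of sheaves of rings, carries nilpotents to nilpotents and therefore descends to the reduced level. The difference is in how each argument converts that observation into an actual point map. The paper works sheaf-theoretically: it passes to the quotient $\mathcal{O}/\mathcal{J}$, identifies it with $C^\infty$ via the evaluation isomorphism $\tau_{I|R}\colon s+\mathcal{J}_{I|R}\mapsto \tilde{s}$, and then invokes the general correspondence (Varadarajan) between morphisms of ringed spaces of smooth functions and smooth maps; this yields $\tilde{g}_{I|R,J|S}\colon U_{I|R,J|S}\to U_{J|S,I|R}$ with the codomain built in, so no separate check is needed. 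You instead build $\tilde{g}_{I|R,J|S}$ coordinate by coordinate from the smooth parts $\varphi_a$ of $g^*_{I|R,J|S}(x^J_a)$, which is more elementary and explicit, but then obliges you to verify by hand that the image point lies in $U_{J|S,I|R}$. Your verification of that step is sound: the computation in part (2) of Proposition \ref{prop1} gives $M'_{I|R}\big[Z^{-1}A_{I|R}\big]_{J|S}=Z^{-1}M'_{I|R}A_{I|R}=Z^{-1}$ with $Z=M'_{J|S}A_{I|R}$, which is invertible at $p$ by hypothesis, and your remark that invertibility of an even supermatrix survives reduction modulo nilpotents (since it is governed by the even diagonal blocks) closes the gap. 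In short, you trade the paper's appeal to the abstract full faithfulness of the reduction functor for an explicit codomain check via the cocycle computation; both are valid, and yours makes visible a point the paper leaves implicit.
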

\begin{proof}
	%Let 
	%\begin{equation*}
	%g_{I|R,J|S}=(\tilde{g}_{I|R,J|S}, g_{I|R,J|S}^*): (U_{I|R,J|S}, \mathcal{O}_{I|R}|_{U_{I|R,J|S}}) \rightarrow (U_{J|S,I|R},\mathcal{O}_{J|S}|_{U_{J|S,I|R}}).
	%\end{equation*}
	Since $ g_{I|R,J|S}^* $ is a morphism between sheaves of rings, we have
	\begin{equation*}
	g_{I|R,J|S}^* (\mathcal{J}_{J|S}) \subset \mathcal{J}_{I|R},
	\end{equation*}
	where $ \mathcal{J} $ is the sheaf of nilpotent elements of $ \mathcal{O} $. So $ g_{I|R,J|S}^* $ induces the following map
	\begin{equation*}
	\bar{g}_{I|R,J|S}^*: \frac{\mathcal{O}_{J|S}}{\mathcal{J}_{J|S}}\bigg|_{U_{{J|S},{I|R}}} \longrightarrow \frac{\mathcal{O}_{I|R}}{\mathcal{J}_{I|R}}\bigg|_{U_{{I|R},{J|S}}}.
	\end{equation*}
	On the other hand, one has the following isomorphism:
	\begin{align*}
	\tau_{I|R}: & \frac{\mathcal{O}_{I|R}}{\mathcal{J}_{I|R}} \rightarrow C^{\infty}_{I|R},\\
	&  s+\mathcal{J}_{I|R} \longmapsto \tilde{s},
	\end{align*}
	where $ \tilde{s}(x) $, for $ x \in U_{I|R} $, is a unique real number such that $ s- \tilde{s}(x) $ is not invertible in $ \mathcal{O}(U) $ for all open neighborhoods $U$ of $x$. So one may consider the composition $ \tilde{g}^*_{{I|R},{J|S}}= \tau_{I|R}\circ
	\bar{g}^*_{{I|R},{J|S}}\circ \tau_{J|S}^{-1} $ as a map between sheaves of rings of smooth functions  $ C^{\infty}_{J|S} $ and $ C^{\infty}_{I|R} $. Thus, there is a smooth map $ \tilde{g}_{I|R,J|S}: U_{I|R} \rightarrow U_{J|S} $ such that $ \tilde{g}_{I|R,J|S}^*(f)= f \circ \tilde{g}_{I|R,J|S} $  for each $ f\in C^{\infty}_{J|S}  $ (c.f \cite{f31}).
\end{proof}
\begin{prop}
	If $ (\mathcal{X},\mathcal{O}) $ is the ringed space which is constructed by gluing $ (U_{I|R},\mathcal{O}_{I|R}) $ through $ g_{I|R,J|S} $ then the reduced manifold associated to it, i.e. $ (\mathcal{X},\tilde{\mathcal{O}}) $, is a manifold diffeomorphic to $ G_k(\mathbb{R}^m)\times G_l(\mathbb{R}^n) $.
\end{prop}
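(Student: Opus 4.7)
I will pass to the reduced manifold and argue chart by chart. The argument has three pieces: identify the local reductions, match the standard charts with the classical Grassmannian atlas of $G_k(\mathbb{R}^m)\times G_l(\mathbb{R}^n)$, and verify that the non-standard charts are already absorbed by the standard ones after reduction.

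On each $\nu$-domain $(U_{I|R},\mathcal{O}_{I|R})$ the nilpotent ideal $\mathcal{J}_{I|R}$ is generated by all odd elements: the odd coordinates $e^I_b$, the formal odd symbol $1\nu$ appearing on the non-standard identity, and the $\nu$-images of coordinates placed in the opposite-parity blocks. Therefore $\tilde{\mathcal{O}}_{I|R} \cong C^\infty_{\mathbb{R}^\alpha}$ and every reduced chart is diffeomorphic to $\mathbb{R}^\alpha$, matching the dimension $\alpha = k(m-k)+l(n-l) = \dim(G_k(\mathbb{R}^m)\times G_l(\mathbb{R}^n))$.

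I will then isolate the standard charts (those with $|I|=k$, $|R|=l$), which are in bijection with the product atlas of $G_k(\mathbb{R}^m)\times G_l(\mathbb{R}^n)$. For two standard indices, the pasting equation
\[
D_{J|S}\bigl((M_{J|S}(A_{I|R}))^{-1}A_{I|R}\bigr)=D_{J|S}A_{J|S}
\]
reduces modulo nilpotents to a block-diagonal identity: the upper-right and lower-left blocks $B_2,B_3$ of $A_{I|R}$ are filled with odd variables that vanish, so $A_{I|R}$ becomes block diagonal with blocks $B_1$ of size $k\times m$ and $B_4$ of size $l\times n$, each being the labelling matrix of a standard chart of the classical Grassmannian. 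The minor $M_{J|S}(A_{I|R})$ splits likewise, its inverse factors blockwise, and the equation decouples into the two independent classical Grassmannian pasting equations for $G_k(\mathbb{R}^m)$ and $G_l(\mathbb{R}^n)$. Hence the reduced transitions $\tilde{g}_{I|R,J|S}$ between standard charts are the products of the classical Grassmannian transitions, and the reduced standard charts glue precisely to $G_k(\mathbb{R}^m)\times G_l(\mathbb{R}^n)$.

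Finally, I must show that the non-standard charts do not alter the reduced manifold. The non-standard labelling matrix $A_{I|R}$ contains three nilpotent kinds of entries — the odd coordinates $e^I_b$, the $\nu$-twists placed into the opposite-parity blocks, and the symbols $1\nu$ on the diagonal of $M_{I|R}(A_{I|R})$ — all of which reduce to zero. I will verify that for each point $x\in U_{I|R}$ there exists a standard $J|S$ such that $M'_{J|S}(A_{I|R})$ is invertible at $x$, so that the transition $\tilde{g}_{I|R,J|S}$ carries $x$ into a reduced standard chart, thereby identifying the reduced non-standard chart with an open subset of the already-constructed $G_k(\mathbb{R}^m)\times G_l(\mathbb{R}^n)$. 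The main obstacle lies precisely here: tracking how the $\nu$-twists and the $1\nu$-symbols propagate through the invertibility condition defining $U_{I|R,J|S}$, and confirming that the resulting cover matches the classical Grassmannian cover. Once this coverage is in place, the consistency of the gluing follows from Proposition~\ref{prop1} applied modulo nilpotents, and the claimed diffeomorphism $(\mathcal{X},\tilde{\mathcal{O}})\cong G_k(\mathbb{R}^m)\times G_l(\mathbb{R}^n)$ follows.
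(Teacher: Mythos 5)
Your route is genuinely different from the paper's: you try to build the diffeomorphism directly, by matching the reduced standard charts with the product atlas of $G_k(\mathbb{R}^m)\times G_l(\mathbb{R}^n)$ and then absorbing the non-standard charts, whereas the paper first constructs an injective immersion $\psi:\mathcal{X}\to G_{k+l}(\mathbb{R}^{m+n})$ (Lemma \ref{lem1}), built chartwise from the matrices $D_{J|S}\big((M_{J|S}\nu''A_{I|R})^{-1}\nu''A_{I|R}\big)$, and then identifies the image of $\psi$ with the image of the product embedding $\Lambda(P,Q)=\pi_1(P)+\pi_2(Q)$. Your treatment of the standard-to-standard transitions is fine and agrees with what the paper's construction gives after reduction. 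The problem is the step you yourself flag as ``the main obstacle'': it is not a routine verification to be filled in later, it is the entire content of the proposition, and the premises you set up make it fail.

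Concretely, your claim that all three kinds of extra entries of a non-standard $A_{I|R}$ reduce to zero is false for the entries of the form $\nu(e)$ with $e$ odd: $\nu$ is an \emph{odd} involution, so $\nu(e)$ is an even section, and (for instance for the involution of Proposition \ref{prop1.1}) its reduction is the constant term of $T(e)$, which is in general nonzero. This is not a technicality. If every odd coordinate, every $\nu$-twist and every $1\nu$ really reduced to $0$, then the reduced label of a non-standard chart (take $I=\emptyset$, $R=\{1,2,3\}$ in $_\nu G_{1|2}(3|3)$) would, at the origin of that chart, be a matrix of rank strictly less than $k+l$; that point would satisfy no invertibility condition $M'_{J|S}$ for a standard $J|S$, your coverage claim would fail, and $(\mathcal{X},\tilde{\mathcal{O}})$ would be strictly larger than $G_k(\mathbb{R}^m)\times G_l(\mathbb{R}^n)$. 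What actually makes the coverage work is precisely the nonvanishing of the reductions $\widetilde{\nu(e)}$ together with the rule that turns the non-standard identity into an honest identity once $\nu$ is applied to odd entries; this is what the paper's operator $\nu''a=\widetilde{\nu^{p(a)}a}$ encodes, and it is why the paper routes the whole argument through $\nu''A_{I|R}$ and the ambient Grassmannian $G_{k+l}(\mathbb{R}^{m+n})$ rather than through the naive reduction modulo nilpotents. To repair your argument you would have to replace ``reduce modulo nilpotents'' by ``apply $\nu''$'' throughout the non-standard charts and then actually prove the coverage statement; at that point you will have essentially reconstructed the paper's Lemma \ref{lem1}.
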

Before proving the proposition, we need the following lemma.
\begin{lem}\label{lem1} There exists an injective immersion from $ \mathcal{X} $ to $ G_{k+l}(\mathbb{R}^{m+n}) $. \end{lem}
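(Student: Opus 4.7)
The plan is to define a smooth map $\phi:\mathcal{X}\to G_{k+l}(\mathbb{R}^{m+n})$ chart-by-chart from the labelling supermatrices and then verify injectivity and the immersion property. For each chart $U_{I|R}$, reduce the labelling supermatrix $A_{I|R}$ modulo the ideal $\mathcal{J}_{I|R}$ of nilpotents to obtain a smooth real $(k+l)\times(m+n)$ matrix function $\tilde{A}_{I|R}$: every odd entry (all $e^I_b$, every $\nu(x^I_a)$, and every occurrence of the symbol $1\nu$) becomes zero, while each $\nu(e^I_b)$ reduces to the real constant read off from the $\wedge^0$-component of the image of $e^I_b$ under the involution $T$ built in Proposition \ref{prop1.1}. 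On a standard chart the columns indexed by $I\cup R$ form the identity submatrix, so $\tilde{A}_{I|R}$ has constant rank $k+l$ everywhere; declare $\phi_{I|R}(p)$ to be the row span of $\tilde{A}_{I|R}(p)$ in $G_{k+l}(\mathbb{R}^{m+n})$. On a non-standard chart, define $\phi_{I|R}$ only on the open locus where $\tilde{A}_{I|R}$ still has full rank $k+l$.

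Next I would verify that the local maps $\phi_{I|R}$ agree on overlaps and therefore glue to a single smooth map $\phi$. On $U_{I|R,J|S}$ the transition $\tilde{g}^{*}_{I|R,J|S}$ from Lemma \ref{lem2.2} is produced by reducing the pasting identity
\begin{equation*}
D_{J|S}\bigl((M'_{J|S}(A_{I|R}))^{-1}A_{I|R}\bigr)=D_{J|S}A_{J|S}.
\end{equation*}
Reducing the full matrix identity $(M'_{J|S}(A_{I|R}))^{-1}A_{I|R}=A_{J|S}$ (the pasting equation combined with the definition of $M'_{J|S}(A_{J|S})$ as a non-standard identity) shows that $\tilde{A}_{J|S}\circ\tilde{g}_{I|R,J|S}$ and $\tilde{A}_{I|R}$ differ by left multiplication by an invertible real matrix. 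Since row span is invariant under left multiplication by an invertible matrix, this gives $\phi_{I|R}=\phi_{J|S}\circ\tilde{g}_{I|R,J|S}$ on $U_{I|R,J|S}$, so $\phi$ is well defined and smooth.

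Finally I would establish injectivity and the immersion property. On a standard chart $U_{I|R}$ the entries of $\tilde{A}_{I|R}$ outside the identity submatrix are exactly the even coordinates $x^I_a$; inverting the Pl\"ucker-style parametrization of the affine coordinate cell of $G_{k+l}(\mathbb{R}^{m+n})$ indexed by $I\cup R$ recovers each $x^I_a$ from the row span, so $\phi_{I|R}$ is a smooth embedding onto an open coordinate patch of a product-type subvariety of $G_{k+l}(\mathbb{R}^{m+n})$. Because the standard charts already cover $\mathcal{X}$ at the reduced level (any reduced point of a non-standard chart is equivalent, via a transition map, to a reduced point of some standard chart), the local normal form upgrades to a global injective immersion.

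The main obstacle is the analysis of non-standard charts. One must carefully track the behaviour of the formal symbol $1\nu$ and of the entries $\nu(x^I_a)$, $\nu(e^I_b)$ under reduction — using the rule $z\cdot 1\nu=\nu(z)$ exploited in the proof of Proposition \ref{prop1} — in order to check that the reduced rows of $A_{J|S}$ truly span the same subspace as the reduced rows of $A_{I|R}$ along each overlap, and to confirm that non-standard charts contribute no reduced points outside the union of the images of the standard charts. Once this check is in place the product-type local normal form yields injectivity and the immersion property simultaneously.
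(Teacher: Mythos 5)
Your overall strategy---reduce each labelling supermatrix to a real $(k+l)\times(m+n)$ matrix, send a point of the chart to the row span (equivalently, to the affine coordinates) of that matrix in $G_{k+l}(\mathbb{R}^{m+n})$, check overlap compatibility from the pasting identity, and invert the affine-chart parametrization to obtain a smooth left inverse---is the same as the paper's, which defines $\psi_{I|R,\tilde J}$ by the entries of $D_{J|S}\big((M_{J|S}\nu'' A_{I|R})^{-1}\nu'' A_{I|R}\big)$ and exhibits the explicit left inverse with components among the entries of $D_{I|R}\big((M_{I|R}\{Y\}_{J|S})^{-1}\{Y\}_{J|S}\big)$.

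There is, however, a concrete error in your reduction recipe, located exactly at the point you flag as ``the main obstacle.'' You reduce $A_{I|R}$ modulo nilpotents, so every odd entry---including each $\nu(x^I_a)$ sitting in an odd block of a non-standard chart---is sent to zero. But those entries are where the even coordinates of a non-standard chart live: some $x^I_a$ occur in $A_{I|R}$ only in the form $\nu(x^I_a)$, so your $\tilde A_{I|R}$ is independent of those coordinates, $\phi_{I|R}$ collapses the corresponding directions, and neither the immersion property nor the compatibility with adjacent standard charts can hold on such a chart. The paper avoids this with the twisted reduction $\nu'' a=\widetilde{\nu^{p(a)}a}$: even entries are reduced as usual, but odd entries are first hit with $\nu$, so that $\nu(x^I_a)\mapsto\widetilde{\nu^2(x^I_a)}=\widetilde{x^I_a}$ and no coordinate information is lost. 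Your proposed fallback---that the standard charts already cover $\mathcal{X}$ at the reduced level, so the non-standard charts can be sidestepped---is asserted but not proved; whether a reduced point of a non-standard chart lies in some $U_{J|S,I|R}$ with $I|R$ standard depends on the invertibility of minors whose entries involve the constants $\widetilde{\nu(e_b)}$, hence on the choice of the involution, and establishing this is essentially as hard as the uniform treatment you are trying to avoid. Replacing your naive reduction by $\nu''$ repairs the argument and brings it into line with the paper's proof.
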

\begin{proof}
	First, note that $ \mathcal{X} $ is constructed by gluing $\nu$-domains $U_{I|R}$ through $\tilde{g}_{{I|R},{J|S}}$. 
%	Let $ U_{I|R}=\mathbb{R}^\alpha $ be a neighborhood in reduced manifold associated to $ \mathcal{X} $.	 As an open subset of $ \mathcal{X} $, 
The elements of $ U_{I|R} $ may be denoted by a $(k+l)\times (m+n-k-l)$ matrix as follows:
	\begin{equation*}
	\left[
	\begin{array}{cc}
	X_I & 0 \\
	0 & X_R
	\end{array}
	\right].
	\end{equation*}
	Now suppose that $ V_{\tilde{J}} $ is an open neighborhood in $ G_{k+l}(\mathbb{R}^{m+n}) $ where $ \tilde{J}\subset \{1, \cdots , m+n \} $ is a multi index with k+l elements. We are going to define a map $ \psi_{I|R,\tilde{J}}:U_{I|R}\longrightarrow V_{\tilde{J}} $.\\
	For this purpose, first decompose $ \tilde{J} $ into two parts, namely $J, S$ as follows:
	
	Suppose that $ j_r $ is the largest element in $ \tilde{J} $ smaller than or equal to $ m $. 
	Set
	\begin{equation*}
	J:=\{j\in \tilde{J}, j<j_r\}, \qquad
	S:= \tilde{J}\backslash J.
	\end{equation*}
	To define $ \psi_{I|R,\tilde{J}} $, suppose that $ X\in U_{I|R} $ and $ A_{I|R} $ be the label associated with $ U_{I|R} $. 
	% We know that $ X $ is obtained from $ A_{I|R} $ by eliminating an identity (or a non-standard identity) minor and also considering the other entries by modding out $ \mathcal{J} $ , where $ \mathcal{J} $ is is sheaf of all nilpotent elements in $ \mathcal{O} $. This relationship between $ X $ and $ A_{I|R} $ may be summarized by $ X= \tilde{A}_{I|R} $.
	
	Now $ \psi_{I|R,\tilde{J}}:U_{I|R}\longrightarrow V_{\tilde{J}}  $ can be defined as a map with component functions to be the same as entries of the following matrix:
	\begin{equation*}
	D_{J|S}\big( (M_{J|S}\nu''A_{I|R})^{-1}\nu''A_{I|R}\big),
	\end{equation*}
	where $ \nu'' $ is a map which is defined on homogenous element, say $ a $, by $ \nu''a= \widetilde{\nu^{p(a)}a} $. By $ \nu'' A_{I|R} $, we mean the matrix $ (\nu''a_{ij}) $ where $ A_{I|R}= (a_{ij}) $. \\
	It can be shown that the following diagram is commutative
	\begin{displaymath}
	\xymatrix{
		U_{I|R} \ar[d]_{\tilde{g}_{I|R,I'|R'}} \ar[r]^{\psi_{I|R,\tilde{J}}} & V_{\tilde{J}} \ar[d]^{\theta_{\tilde{J},\tilde{J'}}} \\
		U_{I'|R'}\ar[r]^{\psi_{I'|R',\tilde{J'}}} & V_{\tilde{J'}} }
	\end{displaymath}
	where $ \theta_{\tilde{J},\tilde{J'}} $ is the transition map which defines the rule for changing coordinates on $ G_{k+l}(\mathbb{R}^{m+n}) $ and $ \tilde{g}_{I|R,I'|R'} $ is introduced in lemma \eqref{lem2.2}. To this end, first, we compute component functions of the map $ \theta_{\bar{J}, \bar{J}'}\circ \psi_{{I|R},\bar{J}} $. These are the entries  of the following matrix:
	% $ \theta_{\tilde{J},\tilde{J'}} \circ \psi_{I|R,\tilde{J}} $:\\
	\begin{align*}
	%\theta_{\tilde{J},\tilde{J'}} \circ \psi_{I|R,\tilde{J}}(X)\\
	%& = \theta_{\tilde{J},\tilde{J'}}\bigg(D_{J|S}\big( (M_{J|S}\nu''A_{I|R})^{-1}\nu''A_{I|R}\big)\bigg)\\
	& D_{J'|S'}\Bigg(\bigg(M_{J'|S'}\big((M_{J|S}\nu'' A_{I|R})^{-1}\nu'' A_{I|R}\big)\bigg)^{-1}(M_{J|S}\nu'' A_{I|R})^{-1}\nu'' A_{I|R}\Bigg)\\
	&  = D_{J'|S'}\bigg(\big((M_{J|S}\nu'' A_{I|R})^{-1}(M_{J'|S'}\nu'' A_{I|R})\big)^{-1}(M_{J|S}\nu'' A_{I|R})^{-1}\nu'' A_{I|R}\bigg)\\
	&= D_{J'|S'}\bigg((M_{J'|S'}\nu'' A_{I|R})^{-1}(M_{J|S}\nu'' A_{I|R})(M_{J|S}\nu'' A_{I|R})^{-1}\nu'' A_{I|R}\bigg)\\
	&= D_{J'|S'}\bigg((M_{J'|S'}\nu'' A_{I|R})^{-1}\nu'' A_{I|R}\bigg).
	\end{align*}
	Now, we are going to compute the following composition:
	\begin{equation*}
	\psi_{{I'|R'},\tilde{J}'}\circ \tilde{g}_{{I|R}{I'|R'}}.
	\end{equation*}
	
	%Note that $ \tilde{\mathcal{O}}_{I'|R'} $ is generated by the entries
	First, note that the component functions of $ \tilde{g}_{{I|R},{I'|R'}} $ are among the entries of the matrix  $ \widetilde{(M_{I'|R'} A_{I|R})}^{-1} \nu'' A_{I|R} $. Thus the definition of the $ \psi_{{I'|R'},\tilde{J}'} $ implies that the component functions of the composition are the entries of the following matrix:
	\begin{align*}
	%\psi_{I'|R',\tilde{J'}}& \widetilde{\big(D_{I'|R'}(M_{I'|R'}A_{I|R})^{-1}\tilde{A}_{I|R}\big)}\\
	& D_{J'|S'}\Bigg( \bigg(M_{J'|S'}\big( \widetilde{(M_{I'|R'}\nu'' A_{I|R})}^{-1}\nu''A_{I|R}\big)\bigg)^{-1}\widetilde{(M_{I'|R'}\nu'' A_{I|R})}^{-1}\nu''A_{I|R}\Bigg)\\
	= &  D_{J'|S'}\bigg((M_{J'|S'}\nu'' A_{I|R})^{-1}\nu'' A_{I|R}\bigg).
	\end{align*}
	This shows that the above diagram commutes. Commutativity shows that $  \psi_{I|R,\tilde{J}} $ is a coordinate representation of a map, say $ \psi $, from $ \mathcal{X} $ to $ G_{k+l}(\mathbb{R}^{m+n}) $. 

Now, we show that the map $ \psi:\mathcal{X}\to G_{k+l}(\mathbb{R}^{m+n}) $ is 1-1 immersion.

		This is obvious that each $\psi_{I|R,\tilde{J}}$ is smooth, so $ \psi $ is a smooth map. The left inverse of $ \psi $ is a map which its component functions are among the entries of the following matrix:
		\begin{equation*}
		%\chi_{\tilde{J},IR} :& U_{I|R} \rightarrow V_{\tilde{J}}\\
		D_{I|R} \bigg((M_{I|R}\{Y\}_{J|S})^{-1}\{Y\}_{J|S}\bigg),
		\end{equation*}
		where $ \{Y\}_{J|S} $ is a matrix constructed by adding $ k+l $ columns with indices in $ J\cup S $ to $ Y $ which together form an identity matrix. This map is smooth and $ \chi_{\tilde{J},I|R} \circ \psi_{I|R,\tilde{J}} =Id_{U_{I|R}} $. So $ \psi $ is a 1-1 immersion.
	\end{proof}
\renewcommand\proofname{\textbf{Proof of proposition 2.3}}
\begin{proof} Consider the imbedding $$\Lambda :G_k(\mathbb{R}^m)\times G_l(\mathbb{R}^n) \rightarrow G_{k+l}(\mathbb{R}^{m+n}),$$ which is defined locally by $  \Lambda (P,Q)= \pi_1(P)+ \pi_2(Q) $
	where $ \pi_1:G_k(m)\to G_{k+l}(m+n)$  and $\pi_2:G_l(n)\to G_{k+l}(m+n) $ are the maps induced by maps $\mathbb{R}^m\to \mathbb{R}^{m+n}$ with $ (a_1, \cdots, a_m)\mapsto (a_1, \cdots, a_m, 0, \cdots,0) $ and $ \mathbb{R}^n\to \mathbb{R}^{m+n}$ with $(a_1, \cdots, a_n)\mapsto (0, \cdots, 0,a_1, \cdots, a_n) $ respectively.
	
	%\begin{align*}
	%&\Lambda_{I|R,\tilde{J}}:U_I \times U_R \rightarrow V_{\tilde{J}}\\
	%& (X,Y)\longmapsto D_{J|S}\bigg((M_{J|S}\nu'' A_{I|R})^{-1}\nu'' A_{I|R}\bigg)
	%\end{align*}
	It is easy to see that $ \Lambda $ is an embedding and  $ \Lambda (G_k(\mathbb{R}^m)\times G_l(\mathbb{R}^n)) $ is equal to the image of $ \mathcal{X} $ under $ \psi $. So there exists a unique diffeomorphism $ \bar{\Lambda}: \mathcal{X} \rightarrow G_k(\mathbb{R}^m)\times G_l(\mathbb{R}^n) $ such that $ \Lambda \circ \bar{\Lambda}= \psi $.
\end{proof}
\begin{defn}
	By a super vector bundle of rank $ k|l $ on a supermanifold $ (X,\mathcal{O}) $, we mean a locally free sheaf of $ \mathcal{O} $-modules of rank $ k|l $.
\end{defn}
\begin{thm}\label{thm}
	There exists a canonical $ k|l $-super vector bundle over $ _\nu G_{k|l}(m|n) $.
\end{thm}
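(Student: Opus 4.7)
The plan is to carry out the classical construction of the tautological bundle over $G_k(\mathbb{R}^m)$ in the present $\nu$-graded setting, using the transition supermatrices that already appear in the gluing of ${}_\nu G_{k|l}(m|n)$. On each chart $U_{I|R}$, the canonical bundle will be the free $\mathcal{O}_{I|R}$-module $\mathcal{E}_{I|R} := \mathcal{O}_{I|R}^{k|l}$; morally, the fiber represents the ``row space'' spanned by the frame $A_{I|R}$, in direct analogy with the classical tautological bundle whose fiber at $P \in G_k(\mathbb{R}^m)$ is $P$ itself.

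First, I would define the gluing morphisms. For each ordered pair of indices $I|R$, $J|S$, set
\begin{equation*}
\varphi_{I|R,\,J|S} : \mathcal{E}_{J|S}\big|_{U_{J|S,\,I|R}} \longrightarrow \mathcal{E}_{I|R}\big|_{U_{I|R,\,J|S}}
\end{equation*}
to be left multiplication by the even invertible supermatrix $\bigl(M'_{J|S}(A_{I|R})\bigr)^{-1}$, whose entries lie in $\mathcal{O}_{I|R}(U_{I|R,\,J|S})$ and which is invertible on the overlap by construction of $U_{I|R,\,J|S}$. Together with the identifications $g^*_{I|R,\,J|S}$ of structure sheaves from Proposition \ref{prop1}, these constitute the transition data of a rank $k|l$ super vector bundle.

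Next, I would verify the cocycle conditions. The reflexivity $\varphi_{I|R,\,I|R} = \mathrm{id}$ is immediate, since $M'_{I|R}(A_{I|R})$ is the (possibly non-standard) identity supermatrix. The pairwise and triple-overlap identities
\begin{align*}
\varphi_{I|R,\,J|S} \circ \varphi_{J|S,\,I|R} &= \mathrm{id}, \\
\varphi_{I|R,\,J|S} \circ \varphi_{J|S,\,T|P} \circ \varphi_{T|P,\,I|R} &= \mathrm{id}
\end{align*}
follow from exactly the calculations used in the proof of Proposition \ref{prop1}: the pivotal identity $[Z^{-1}A_{I|R}]_{J|S} = Z^{-1}A_{I|R}$ with $Z := M'_{J|S}(A_{I|R})$, together with the compatibility $M'_{T|P}\bigl(Z^{-1}A_{I|R}\bigr) = Z^{-1}\,M'_{T|P}(A_{I|R})$, translates directly into the corresponding identities of transition supermatrices.

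The main obstacle, as throughout this paper, is the bookkeeping around the formal symbol $1\nu$ and the involution $\nu$: one must check that $\bigl(M'_{J|S}(A_{I|R})\bigr)^{-1}$ is genuinely an even, invertible $k|l \times k|l$ supermatrix with entries in $\mathcal{O}_{I|R}$, so that $\varphi_{I|R,\,J|S}$ is a parity-preserving $\mathcal{O}$-linear isomorphism of free super modules of rank $k|l$. The rule $z \cdot 1\nu = \nu(z)$ is the mechanism that makes this possible, and the same mechanism already underlies the pasting equations defining $g^*_{I|R,\,J|S}$. Once this verification is in place, gluing the local pieces $\mathcal{E}_{I|R}$ via the cocycle $\{\varphi_{I|R,\,J|S}\}$ yields the desired canonical super vector bundle of rank $k|l$ over ${}_\nu G_{k|l}(m|n)$.
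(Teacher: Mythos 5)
Your proposal matches the paper's proof in all essentials: the local pieces are free $\mathcal{O}_{I|R}$-modules of rank $k|l$ (the paper writes them as $\mathcal{O}_{I|R}\otimes_{\mathbb{R}}\langle A^1_{I|R},\dots,A^{k+l}_{I|R}\rangle_{\mathbb{R}}$ with the rows of $A_{I|R}$ as a distinguished basis), and the transition morphisms are given by $g^*_{I|R,J|S}$ on coefficients together with the matrix $\bigl(M'_{J|S}(A_{I|R})\bigr)^{-1}$ on the basis, with the cocycle conditions reduced to the computations of Proposition \ref{prop1}. This is exactly the paper's construction, so no further comparison is needed.
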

\renewcommand\proofname{Proof}
\begin{proof}
	On every neighborhood $ U_{I|R}$, one may define a sheaf of $ \mathcal{O}_{I|R} $- modules of rank $ k|l $ as 
	\begin{equation*}
	\mathcal{O}_{I|R}\otimes_{\mathbb{R}}\left\langle A^1_{I|R}, \cdots, A^{k+l}_{I|R} \right\rangle_{\mathbb{R}},
	\end{equation*}
	where by $ A^i_{I|R} $ we mean $ i $-th row of $ A_{I|R} $ and also $ \left\langle A^1_{I|R}, \cdots, A^{k+l}_{I|R} \right\rangle_{\mathbb{R}} $ is a supervector space on $ \mathbb{R} $ generated by $  A^1_{I|R}, \cdots, A^{k+l}_{I|R} $. These sheaves may be glued together through the morphisms
	\begin{equation*} 
	\eta_{I|R,J|S}:\mathcal{O}_{J|S}|_{U_{{J|S},{I|R}}}\otimes_{\mathbb{R}}\left\langle A^1_{J|S}, \cdots, A^{k+l}_{J|S} \right\rangle_{\mathbb{R}} \longrightarrow \mathcal{O}_{I|R}|_{U_{{I|R},{J|S}}}\otimes_{\mathbb{R}}\left\langle A^1_{I|R}, \cdots, A^{k+l}_{I|R} \right\rangle_{\mathbb{R}},
	\end{equation*}
	which are defined by
	\begin{equation*}
	a\otimes A^i_{J|S}\longmapsto g^*_{{I|R,J|S}}(a)\sum\limits_t m_{it}|_{U_{I|R,J|S}}\otimes A^t_{I|R},
	\end{equation*}
	where $ m_{it} $ is the entry of $ (M'_{J|S}A_{I|R})^{-1} $ on the $ i $-th row and $ t $-th column.
	A straight forward computation shows that $ \eta_{{I|R},{J|S}} $ satisfies the conditions of proposition \eqref{prop1}.
\end{proof}
This canonical super vector bundle over $_{\nu}G_{k|l}(m|n)$ is denoted by $\Gamma.$ It is worth to know that there exists a $2$-cocycle associated to $\Gamma$ canonically. Indeed, one may set $$h_{{I|R}{J|S}}:=(\nu_\circ)^{p(I|R)+p(J|S)}(M^\prime_{I|R}A_{J|S})^{-1},$$
where $p(I|R) = 0$ if $I|R$ is a standard index and otherwise
$p(I|R) = 1$. Moreover, $A_{I|R}$ is the supermatrix associated to the $\nu$-domain $U_{I|R}$. It can be shown that ${h_{{I|R}{J|S}}}$ is a cocycle of open cover $U_{{I|R}{J|S}}$. This shows that $\Gamma$ and eventually $_{\nu}G_{k|l}(m|n)$ are novel things different from the supergrssmannian and its canonical super vector bundle. See \cite{Roshandel} for more details.

\end{document}